\newtheorem{thm}{Theorem}
\newtheorem{prp}{Proposition}
\newtheorem{cor}{Corollary}
\newtheorem{exm}{Example} 
\journal{}
\begin{document}

\begin{frontmatter}

\title{Edge metric dimension of some generalized Petersen graphs\tnoteref{msci}}

\author{Vladimir Filipovi\' c \fnref{matf}}
\ead{vladofilipovic@hotmail.com}

\author{Aleksandar Kartelj \fnref{matf}}
\ead{aleksandar.kartelj@gmail.com}

\author{Jozef Kratica \fnref{mi}}
\ead{jkratica@mi.sanu.ac.rs}

\address[mi]{Mathematical Institute, Serbian Academy of Sciences and Arts, Kneza Mihaila 36/III, 11 000 Belgrade, Serbia}
\address[matf]{Faculty of Mathematics, University of Belgrade, Studentski trg 16/IV, 11 000 Belgrade, Serbia} 
 
\tnotetext[msci]{This research was partially supported by Serbian Ministry of
Education, Science and Technological Development under the grants no. 174010 and
174033.}

\begin{abstract}
The edge metric dimension problem was recently introduced, which 
initiated the study of its mathematical properties. The theoretical properties of the edge metric representations and
the edge metric dimension of generalized Petersen graphs $GP(n,k)$ are studied in this paper. 
We prove the exact formulae for $GP(n,1)$ and $GP(n, 2)$, while for the other values of $k$ the lower bound is stated.
\end{abstract}

\begin{keyword}
edge metric dimension, generalized Petersen graphs, discrete mathematics
\end{keyword}

\end{frontmatter}


\section{Introduction}

The concept of metric dimension of a graph $G$ was introduced independently by Slater (1975) in \cite{metd1} 
and Harary and Melter (1976) in \cite{metd2}. This concept is based on the notion of resolving set $R$ of vertices, which has the property that 
each vertex is uniquely identified by its metric representations with respect to $R$. The minimal cardinality 
of resolving sets is called the metric dimension of graph $G$.  

\subsection{Literature review}

Kelenc, Tratnik and Yero (2016) in \cite{kel16} recently introduced a similar concept of edge metric dimension and 
initiated the study of its mathematical properties. They made a comparison between the
edge metric dimension and the standard metric dimension of graphs while presenting realization results concerning the edge metric dimension and the standard metric
dimension of graphs. They also prove that edge metric dimension problem is NP-hard in a general case,
and provided approximation results. Additionally, for several 
classes of graphs, exact values for edge metric dimension were presented, while several others were given upper and lower bounds. 
In \cite{yer16}, the authors presented results of mixed metric dimension alongside the edge metric dimension for some classes of graphs.
Additionally, Peterin and Yero, in \cite{pet18}, provide exact formulas for join, lexicographic and corona product of graphs. 

Zubrilina, in paper \cite{zub16a}, firstly proposed the classification of graphs of $n$ vertices for which the edge metric dimension is equal to its upper bound $n-1$. The second result states that the ratio between edge metric dimension and metric dimension of
an arbitrary graph is not bounded from above. Third result characterize change of the edge dimension of an arbitrary graph upon taking
a Cartesian product with a path, and change of the edge dimension upon adding a vertex adjacent to all the original vertices.
The edge metric dimension of the Erd\"os-R\'enyi random graph $G(n, p)$ is given by Zubrilina in \cite{zub16b}
and it is equal to $(1 + o(1)) \cdot \frac{4 log(n)}{log (1/q)}$, where $q=1-2p(1-p)^2(2-p)$.

Independently, Epstein, Levin and Woeginger (2015) \cite{eps15}, introduced another edge metric dimension definition related to the line graphs. 
Their edge metric dimension of graph $G$ is defined as metric dimension of $L(G)$, 
which is called edge variant of metric dimension by some authors, e.g. Liu et al. (2018) \cite{liu18}.

\subsection{Generalized Petersen graphs}

Generalized Petersen graphs were first studied by Coxeter \cite{gpprvi}. 
Each such graph, denoted as $GP(n,k)$, is defined for $n \geq 3$ and $1 \leq k < n/2$. 
It has $2n$ vertices and $3n$ edges, with vertex set $V(GP(n,k))$ = \{$u_i, v_i \;|\; 0
\leq i \leq n-1\}$ and edge set $E(GP(n,k))$ = \{$u_iu_{i+1},\;u_iv_i, \;v_iv_{i+k}\;|\;0 \leq i \leq n-1$\}.
It should be noted that vertex indices are taken modulo $n$. 

\begin{exm}Consider the Petersen graph, numbered as $GP(5,2)$, shown in Figure 1. It is easily calculated, by using total enumeration technique, that its edge metric dimension is equal to 4 (it is also presented in Table \ref{tabgpn2}). In Figure 2, $GP(6,1)$ with edge metric dimension equal to 3, is presented. This can also be concluded by Theorem \ref{gpn1}.
\end{exm}  

\begin{figure}[htbp]
\centering\setlength\unitlength{1mm}
\begin{picture}(46,48)
\thicklines
\tiny
\put(24.9,40.6){\circle*{2}} \put(24.9,32.6){\circle*{2}}
\put(24.9,40.6){\line(4,-3){19.0}} \put(24.9,40.6){\line(0,-1){8.0}} \put(24.9,32.6){\line(1,-3){7.1}}
\put(23.9,43.6){$u_0$} \put(26.1,34.4){$v_0$}
\put(43.9,26.8){\circle*{2}} \put(36.3,24.3){\circle*{2}}
\put(43.9,26.8){\line(-1,-3){7.3}} \put(43.9,26.8){\line(-3,-1){7.6}} \put(36.3,24.3){\line(-4,-3){18.5}}
\put(45.7,27.7){$u_1$} \put(37.7,22.8){$v_1$}
\put(36.6,4.4){\circle*{2}} \put(31.9,10.9){\circle*{2}}
\put(36.6,4.4){\line(-1,0){23.5}} \put(36.6,4.4){\line(-3,4){4.7}} \put(31.9,10.9){\line(-4,3){18.5}}
\put(37.4,2.0){$u_2$} \put(30.2,8.1){$v_2$}
\put(13.1,4.4){\circle*{2}} \put(17.8,10.9){\circle*{2}}
\put(13.1,4.4){\line(-1,3){7.3}} \put(13.1,4.4){\line(3,4){4.7}} \put(17.8,10.9){\line(1,3){7.1}}
\put(10.4,2.0){$u_3$} \put(14.0,10.7){$v_3$}
\put(5.9,26.8){\circle*{2}} \put(13.5,24.3){\circle*{2}}
\put(5.9,26.8){\line(4,3){19.0}} \put(5.9,26.8){\line(3,-1){7.6}} \put(13.5,24.3){\line(1,0){22.8}}
\put(2.0,27.7){$u_4$} \put(11.4,27.0){$v_4$}
\end{picture}
\caption{Petersen graph GP(5,2)}
\end{figure}
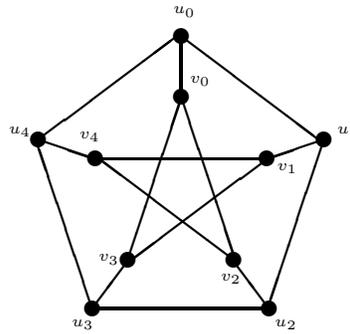

\begin{figure}[htbp]
\centering\setlength\unitlength{1mm}
\begin{picture}(50,44)
\thicklines
\tiny
\put(22.9,45.0){\circle*{2}} \put(22.9,37.0){\circle*{2}}
\put(22.9,45.0){\line(5,-3){17.3}} \put(22.9,45.0){\line(0,-1){8.0}} \put(22.9,37.0){\line(5,-3){10.4}}
\put(21.9,48.0){$u_0$} \put(21.9,34.0){$v_0$}
\put(40.2,35.0){\circle*{2}} \put(33.3,31.0){\circle*{2}}
\put(40.2,35.0){\line(0,-1){20.0}} \put(40.2,35.0){\line(-5,-3){6.9}} \put(33.3,31.0){\line(0,-1){12.0}}
\put(41.8,36.5){$u_1$} \put(29.7,29.5){$v_1$}
\put(40.2,15.0){\circle*{2}} \put(33.3,19.0){\circle*{2}}
\put(40.2,15.0){\line(-5,-3){17.3}} \put(40.2,15.0){\line(-5,3){6.9}} \put(33.3,19.0){\line(-5,-3){10.4}}
\put(41.8,13.5){$u_2$} \put(29.7,20.5){$v_2$}
\put(22.9,5.0){\circle*{2}} \put(22.9,13.0){\circle*{2}}
\put(22.9,5.0){\line(-5,3){17.3}} \put(22.9,5.0){\line(0,1){8.0}} \put(22.9,13.0){\line(-5,3){10.4}}
\put(21.9,2.0){$u_3$} \put(21.9,16.0){$v_3$}
\put(5.6,15.0){\circle*{2}} \put(12.5,19.0){\circle*{2}}
\put(5.6,15.0){\line(0,1){20.0}} \put(5.6,15.0){\line(5,3){6.9}} \put(12.5,19.0){\line(0,1){12.0}}
\put(2.0,13.5){$u_4$} \put(14.1,20.5){$v_4$}
\put(5.6,35.0){\circle*{2}} \put(12.5,31.0){\circle*{2}}
\put(5.6,35.0){\line(5,3){17.3}} \put(5.6,35.0){\line(5,-3){6.9}} \put(12.5,31.0){\line(5,3){10.4}}
\put(2.0,36.5){$u_5$} \put(14.1,29.5){$v_5$}
\end{picture}
\caption{Graph GP(6,1)}
\end{figure}
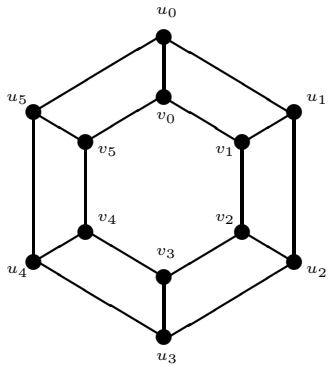

The metric dimension of generalized Petersen graphs $GP(n,k)$ is studied for different values of k:
\begin{itemize}
\item Case $k=1$ is concluded from \cite{cac07};
\item Case $k=2$ is proven in \cite{jav08};
\item Case $k=3$ in \cite{imr14}.
\end{itemize}

Various other properties of generalized Petersen graphs have been recently
theoretically investigated in the following areas: Hamiltonian property \cite{wan17},
the cop number \cite{bal17}, the total coloring \cite{dan16}, etc.

\subsection{Definitions and previous work}

Given a simple connected undirected graph $G = (V,E)$, for $u,v
\in V$ $d(u,v)$ denotes the distance between $u$ and $v$ in $G$,
i.e. the length of a shortest $u-v$ path. A vertex $x$ of the graph
G is said to resolve two vertices $u$ and $v$ of $G$ if $d(x,u) \neq
d(x,v)$. An ordered vertex set $R$ = \{$x_1, x_2, ..., x_k$\} of $G$
is a resolving set of $G$ if every two distinct vertices of $G$ are
resolved by some vertex of $R$. A metric
basis of $G$ is a resolving set of the minimum cardinality. The
metric dimension of $G$, denoted by $\beta(G)$, is the cardinality
of its metric basis.

Similarly, for a given connected graph $G$, a vertex $w \in V$ and an edge $uv \in E$, the distance
between the vertex $w$ and the edge $uv$ is defined as $d(w,uv) = \min\{d(w, u), d(w, v)\}$. A vertex
$w \in V$ resolves two edges $e_1$ and $e_2$ ($e_1,e_2 \in E$), if $d(w, e1) \ne d(w, e2)$.
A set $S$ of vertices in a connected graph $G$ is an edge metric generator for $G$ if every two
edges of $G$ are resolved by some vertex of $S$. The smallest cardinality of an edge metric
generator of $G$ is called the edge metric dimension and is denoted by $\beta_E(G)$. An edge
metric basis for $G$ is an edge metric generator of $G$ with cardinality $\beta_E(G)$.
Given an edge $e \in E$ and an ordered vertex set $S$ = \{$x_1, x_2, ..., x_k$\}, the $k$-touple
$r(e,S)$ = ($d(e,x_1), d(e,x_2), ..., d(e,x_k)$) is called the
edge metric representation of $e$ with respect to $S$.

\begin{exm}Consider the generalized Petersen graph $GP(6,1)$ given on Figure 2.
The set $S_1=\{u_0,  u_1, u_3\}$ is an edge metric generator for $G$ since
the vectors of metric coordinates for edges of $G$ with respect
to $S_1$ are mutually different: $r(u_0u_1,S_1)$=(0,0,2); $r(u_0u_5,S_1)$=(0,1,2);
$r(u_0v_0,S_1)$=(0,1,3); $r(u_1u_2,S_1)$=(1,0,1); 
$r(u_1v_1,S_1)$=(1,0,2); $r(u_2u_3,S_1)$=(2,1,0);
$r(u_2v_2,S_1)$=(2,1,1); $r(u_3u_4,S_1)$=(2,2,0);
$r(u_3v_3,S_1)$=(3,2,0); $r(u_4u_5,S_1)$=(1,2,1);
$r(u_4v_4,S_1)$=(2,3,1); $r(u_5v_5,S_1)$=(1,2,2);
$r(v_0v_1,S_1)$=(1,1,3); $r(v_0v_5,S_1)$=(1,2,3);
$r(v_1v_2,S_1)$=(2,1,2); $r(v_2v_3,S_1)$=(3,2,1);
$r(v_3v_4,S_1)$=(3,3,1); $r(v_4v_5,S_1)$=(2,3,2).
From Corollary \ref{deg}, it holds that for $GP(6,1)$,
as for any other generalized Petersen graph, 
cardinality of edge metric generator must be at least 3,
so $S_1$ is an edge metric basis for $GP(6,1)$. This implies that its edge metric dimension is equal to 3, i.e.
$\beta_E(GP(6,1)) = 3$.
\end{exm}

Two edges are called an incident, if both contain one common endpoint.
For a given vertex $v \in V$, its degree $deg_v$ is equal to number of
its neighbors, i.e. number of edges in which it is endpoint.
Maximum and minimum degree over all vertices of graph $G$ is noted as  
$\Delta(G)$ and $\delta(G)$, respectively. Formally,
$\Delta(G) = \max \limits_{v \in V} \,{\deg _v}$
and $\delta(G) = \min \limits_{v \in V} \,{\deg _v}$.

\begin{prp} \mbox{\rm(\cite{kel16})} \label{edim1} For $n \ge 2$ it holds $\beta_E(P_n) = \beta(P_n) = 1$,
$\beta_E(C_n) = \beta(C_n) = 2$, $\beta_E(K_n) = \beta(K_n) = n-1$. Moreover, $\beta_E(G)=1$ if and only if $G$ is a path $P_n$.
\end{prp}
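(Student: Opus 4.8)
The plan is to verify the three equalities one class at a time and then handle the characterization separately. For the path $P_n$ with its vertices listed in order $x_1,\dots,x_n$, I would take $S=\{x_1\}$: the distances $d(x_1,x_j)=j-1$ are pairwise distinct, so $S$ is a resolving set, and likewise $d(x_1,x_jx_{j+1})=j-1$ for $j=1,\dots,n-1$ are pairwise distinct, so $S$ is also an edge metric generator. Since $n\ge 2$ means $G$ has at least one edge and at least two vertices, neither dimension can be $0$, hence $\beta(P_n)=\beta_E(P_n)=1$.

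For $C_n$, any single vertex $w$ fails in both settings, because the two neighbours of $w$ (resp.\ the two edges incident to $w$) are equidistant from $w$; a pair of consecutive vertices is then routine to check to resolve all vertices and all edges, giving $\beta(C_n)=\beta_E(C_n)=2$. For $K_n$ the key remark is that distinct vertices are always at distance $1$, so for a vertex $x$ and an edge $e$ one has $d(x,e)=0$ when $x$ is an endpoint of $e$ and $d(x,e)=1$ otherwise, and similarly $x$ distinguishes two vertices (resp.\ two edges) only if it lies in exactly one of them. Consequently a (edge) metric generator can omit at most one vertex: if it omitted two vertices then, together with a common third vertex when $n\ge 3$, this would leave an unresolved pair; and $V\setminus\{p\}$ does work, since the symmetric difference of two distinct edges always contains a vertex other than $p$. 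Hence $\beta(K_n)=\beta_E(K_n)=n-1$ (the case $n=2$ coinciding with $P_2$).

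For the final assertion, the non-trivial direction is to show that $\beta_E(G)=1$ forces $G$ to be a path. Suppose $\{w\}$ is an edge metric generator, and stratify $V$ into the BFS layers $L_i=\{v: d(w,v)=i\}$. Every edge lies either inside some $L_i$ or between consecutive layers $L_i,L_{i+1}$, and in both cases its distance to $w$ equals $i$; therefore the edges at distance $i$ from $w$ are exactly the edges inside $L_i$ together with the edges joining $L_i$ to $L_{i+1}$, and for $\{w\}$ to resolve all edges there can be at most one such edge for each value $i$. Starting from $|L_0|=1$ and using that each vertex of $L_{i+1}$ is adjacent to some vertex of $L_i$, an induction then shows that every layer is a singleton and no layer contains an internal edge, so $G$ is exactly the path $w=x_1,x_2,\dots$; finiteness and connectedness make it some $P_n$. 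The main obstacle is making this layer-counting induction fully airtight — in particular simultaneously excluding intra-layer edges and multiple descending edges at the same distance — but once the ``at most one edge per distance value'' observation is in place, the argument is a clean induction.
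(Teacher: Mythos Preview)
The paper does not supply its own proof of this proposition: it is quoted verbatim from \cite{kel16} and used as background, so there is no in-paper argument to compare against. Your proposal is correct and follows the standard lines one finds in \cite{kel16}: an endpoint resolves $P_n$, two adjacent vertices resolve $C_n$, the complement-of-a-point argument handles $K_n$, and the BFS-layer count (``at most one edge at each distance from $w$'') forces a path when $\beta_E(G)=1$. One tiny quibble: your sentence ``neither dimension can be $0$'' is immediate for $\beta$ once $n\ge 2$, but for $\beta_E$ at $n=2$ there is only a single edge, so the empty set vacuously separates all pairs of edges; the equality $\beta_E(P_2)=1$ therefore rests on the (standard) convention that generators are taken nonempty, which you may want to state explicitly.
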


\begin{prp} \mbox{\rm(\cite{kel16})} \label{edim2} Let $G$ be connected graph and let $\Delta(G)$ be the maximum degree of $G$. Then $\beta_E(G) \geq log_2 \Delta(G)$
\end{prp}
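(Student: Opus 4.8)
The plan is to exploit a single vertex of maximum degree together with a counting argument on the possible edge metric representations of the edges incident to it. Let $x \in V$ be a vertex with $\deg_x = \Delta(G)$, and let $e_1, e_2, \dots, e_{\Delta(G)}$ be the $\Delta(G)$ edges incident to $x$, say $e_i = x y_i$. Suppose $S = \{w_1, w_2, \dots, w_k\}$ is an arbitrary edge metric generator of $G$; the goal is to show $k \ge \log_2 \Delta(G)$, which suffices since then $\beta_E(G) = \min |S| \ge \log_2 \Delta(G)$.

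The key observation is that, for a fixed $w_j \in S$, the value $d(w_j, e_i)$ can take only two possible values as $i$ ranges over $\{1, \dots, \Delta(G)\}$. Indeed, by the triangle inequality $|d(w_j, y_i) - d(w_j, x)| \le 1$, so $d(w_j, y_i) \in \{d(w_j,x) - 1,\ d(w_j,x),\ d(w_j,x)+1\}$, and hence $d(w_j, e_i) = \min\{d(w_j, x),\, d(w_j, y_i)\} \in \{d(w_j,x) - 1,\ d(w_j,x)\}$. In the special case $w_j = x$ this set collapses to $\{0\}$, which is only more restrictive. Thus the $j$-th coordinate of $r(e_i, S)$ lies in a two-element set that does not depend on $i$.

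Consequently the map $i \mapsto r(e_i, S)$ sends $\{1, \dots, \Delta(G)\}$ into a set of at most $2^k$ tuples. Since $S$ is an edge metric generator it resolves every pair of edges of $G$, so in particular this map is injective on $e_1, \dots, e_{\Delta(G)}$. Therefore $\Delta(G) \le 2^k$, i.e. $k \ge \log_2 \Delta(G)$, and the proposition follows.

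There is essentially no serious obstacle here; the only point requiring a little care is the boundary behaviour of the minimum when a generator vertex equals $x$ or is adjacent to $x$, and this is dealt with by the remark that in those cases the two-element value set can only shrink, so the bound $2^k$ on the number of distinct representations still holds.
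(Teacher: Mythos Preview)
Your proof is correct. The paper does not actually supply its own proof of this proposition---it is quoted from \cite{kel16}---so there is no in-paper argument to compare against directly. That said, your reasoning is essentially the same idea the paper deploys in the proof of Theorem~1 (the strengthening $\beta_E(G)\ge 1+\lceil\log_2\delta(G)\rceil$): fix a vertex, observe that any other vertex can attain only two possible distances to the edges incident with it, and apply pigeonhole. The only difference is that you anchor at a vertex of maximum degree (possibly outside $S$), yielding $\Delta(G)\le 2^{k}$, whereas the paper anchors at a vertex $w_1\in S$, so the first coordinate is forced to $0$ and one obtains $\delta(G)\le 2^{k-1}$.
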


\begin{prp} \mbox{\rm(\cite{kel16})} \label{edim3} Let $G$ be a connected graph and let $S$ be an edge metric basis with $|S| = k$.
Then $S$ does not contain a vertex with the degree greater than $2^{k\-1}$.
\end{prp}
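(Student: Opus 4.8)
The plan is to exploit that every vertex is at distance $0$ from each edge incident to it, so a vertex $w\in S$ contributes nothing toward separating the edges incident to $w$, and those edges must be told apart using only the remaining $k-1$ coordinates — each of which, moreover, is ``two-valued'' on that family of edges. Concretely, fix $w\in S$, set $d=\deg_w$, write $N(w)=\{x_1,\dots,x_d\}$, and let $e_i=wx_i$ be the incident edges, $1\le i\le d$. Since $d(w,e_i)=\min\{d(w,w),d(w,x_i)\}=0$ for every $i$, the $w$-coordinate of $r(e_i,S)$ is $0$ for all $i$; hence $w$ resolves no two of $e_1,\dots,e_d$, and all the resolving must be done by the ordered set $S'=S\setminus\{w\}$ of size $k-1$.

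The key step is a two-value observation: for every $y\in S'$ and every $i$, since $wx_i\in E$ the triangle inequality yields $|d(y,x_i)-d(y,w)|\le 1$, hence $d(y,x_i)\ge d(y,w)-1$, and therefore $d(y,e_i)=\min\{d(y,w),d(y,x_i)\}\in\{d(y,w)-1,\,d(y,w)\}$. So, as $i$ runs over $1,\dots,d$, the $y$-coordinate of $r(e_i,S)$ attains at most two distinct values; this is true for each of the $k-1$ vertices of $S'$ (including the boundary cases where $y$ is adjacent to $w$ or $y=x_i$, in which the value is $d(y,w)-1$).

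To conclude, I observe that the projections $r(e_1,S'),\dots,r(e_d,S')$ are pairwise distinct: $e_i$ and $e_j$ already agree in the $w$-coordinate, so if they also agreed on $S'$ we would get $r(e_i,S)=r(e_j,S)$ for two distinct edges, contradicting that $S$ is an edge metric generator. But each $r(e_i,S')$ lies in a product of $k-1$ sets of cardinality at most $2$, a set with at most $2^{k-1}$ elements. Hence $d\le 2^{k-1}$, and since $w\in S$ was arbitrary the claim follows.

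I do not anticipate a genuine obstacle: this is a short counting argument, and the only point needing care is checking the two-value claim in the degenerate configurations above (it holds in all of them). It is worth noting that the same counting, applied to an arbitrary vertex $w$ of maximum degree that is not assumed to lie in $S$ and using all $k$ coordinates, reproves Proposition~\ref{edim2}; the present statement is precisely the improvement obtained when $w$ lies in $S$ and so ``wastes'' one coordinate on the value $0$.
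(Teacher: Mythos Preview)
Your argument is correct. Note, however, that the paper does not supply its own proof of this proposition: it is quoted from \cite{kel16} and used as a known fact, so there is no in-paper proof to compare against directly. That said, the two-value observation you isolate---that for any vertex $y$ and any family of edges sharing a common endpoint $w$, the distances $d(y,\cdot)$ take at most two values---is exactly the mechanism the paper itself deploys in its proof of Theorem~1 (applied there to $w=w_1\in S$ to bound $\delta(G)$ rather than the degree of an arbitrary basis vertex). So your approach matches both the standard proof and the paper's own use of the same idea.
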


From each of these propositions, given in \cite{kel16}, it follows the next two statements: 

\begin{cor} \label{edim4} Edge metric dimension of the 3-regular graph is at least 2.\end{cor}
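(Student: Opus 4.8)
The plan is to deduce the bound directly from any one of the three propositions just quoted, using only the observation that a $3$-regular graph is neither a path nor a graph of small maximum degree. I would organize the argument around Proposition~\ref{edim3}, since it yields the cleanest contradiction, and then remark that Propositions~\ref{edim1} and~\ref{edim2} give the same conclusion.

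First I would suppose, for contradiction, that $G$ is a connected $3$-regular graph with $\beta_E(G) = 1$, and let $S = \{x\}$ be an edge metric basis. Applying Proposition~\ref{edim3} with $k = 1$, the set $S$ cannot contain a vertex of degree greater than $2^{k-1} = 2^0 = 1$; but $x$ has degree $3$ in $G$, a contradiction. Since any connected graph on at least two vertices (and a $3$-regular graph certainly has more) has $\beta_E(G) \ge 1$, this forces $\beta_E(G) \ge 2$. I would then note the two alternative routes: via Proposition~\ref{edim1}, a $3$-regular graph is not a path $P_n$ (a path has vertices of degree $1$), so the ``moreover'' clause gives $\beta_E(G) \ne 1$, hence $\beta_E(G) \ge 2$; and via Proposition~\ref{edim2}, here $\Delta(G) = 3$, so $\beta_E(G) \ge \log_2 3 > 1$, and integrality of $\beta_E(G)$ again yields $\beta_E(G) \ge 2$.

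There is essentially no obstacle in this argument; the only point requiring a moment's care is that the bound of Proposition~\ref{edim2} is not itself an integer, so one must invoke the integrality of $\beta_E(G)$ to pass from $\log_2 3$ to $2$ — which is precisely why routing the proof through Proposition~\ref{edim1} or Proposition~\ref{edim3} is the tidier choice, and I would present it that way.
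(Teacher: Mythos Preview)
Your proposal is correct and matches the paper's own treatment: the paper does not give a separate proof but simply states that Corollaries~\ref{edim4} and~\ref{edim5} follow from each of Propositions~\ref{edim1}, \ref{edim2}, and~\ref{edim3}. You have spelled out all three derivations accurately, so your write-up is if anything more detailed than what the paper provides.
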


\begin{cor} \label{edim5} $\beta_E(GP(n,k)) \geq 2$.\end{cor}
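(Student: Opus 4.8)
The plan is to reduce the statement to Corollary \ref{edim4} by checking that every generalized Petersen graph is $3$-regular. First I would recall the edge set $E(GP(n,k)) = \{u_iu_{i+1},\ u_iv_i,\ v_iv_{i+k} : 0 \le i \le n-1\}$ and read off, for a fixed index $i$, the neighbours of $u_i$ and of $v_i$: the vertex $u_i$ is incident precisely with $u_{i-1}$, $u_{i+1}$ and $v_i$, while $v_i$ is incident precisely with $v_{i-k}$, $v_{i+k}$ and $u_i$ (all indices taken modulo $n$).

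Second, I would verify that in each case these three neighbours are genuinely distinct, so that the degree really equals $3$ and is not smaller. For $u_i$ this requires $u_{i-1} \ne u_{i+1}$, i.e. $2 \not\equiv 0 \pmod n$, which holds since $n \ge 3$. For $v_i$ it requires $v_{i-k}$, $v_{i+k}$, $v_i$ to be pairwise distinct, i.e. $k \not\equiv 0 \pmod n$ and $2k \not\equiv 0 \pmod n$; both follow from the defining restriction $1 \le k < n/2$, which gives $0 < 2k < n$. Hence $\deg_{u_i} = \deg_{v_i} = 3$ for every $i$, so $GP(n,k)$ is a $3$-regular graph.

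Finally, I would apply Corollary \ref{edim4}: the edge metric dimension of a $3$-regular graph is at least $2$, whence $\beta_E(GP(n,k)) \ge 2$. Equivalently, one could invoke Proposition \ref{edim2} with $\Delta(GP(n,k)) = 3$, obtaining $\beta_E(GP(n,k)) \ge \log_2 3 > 1$ and therefore $\beta_E(GP(n,k)) \ge 2$, since this quantity is an integer. There is no serious obstacle here; the only point that needs a moment's care is the distinctness check in the second step, where the hypothesis $k < n/2$ is exactly what is required to guarantee that $v_i$ has three distinct neighbours.
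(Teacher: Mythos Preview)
Your argument is correct and matches the paper's approach: the paper simply notes that Corollaries~\ref{edim4} and~\ref{edim5} follow immediately from Propositions~\ref{edim1}--\ref{edim3}, and your write-up just makes explicit the (standard) verification that $GP(n,k)$ is $3$-regular before invoking Corollary~\ref{edim4} or Proposition~\ref{edim2}.
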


As previosly mentioned, Epstein, Levin and Woeginger in 2015 \cite{eps15} introduced another edge metric dimension definition based on line graphs. 
Line graph of a graph $G(V,E)$ is defined as: $L(G)=(E,F)$ where $F=\{e_{i}e_{j} | e_i, e_j \in E, e_i \; \text{is incident with} \; e_j \}$. 
Their edge metric dimension of graph $G$ is defined as metric dimension of $L(G)$. 
In order to avoid misunderstanding, $\beta_E'(G)$ will denote this second definition of edge metric dimension of graph $G$, also called edge version of metric dimension \cite{liu18}, i.e. $\beta_E'(G)=\beta(L(G)$. Based on this definition, in \cite{nas18}, the authors obtained the results for $n$-sunlet graphs and prism graphs.

Difference between these definitions can be demonstrated with the following example. 
Let $G_1=(V_1,E_1)$ be the graph with $V_1=\{v_0, v_1, v_2, v_3\}$ and $E_1=\{e_0, e_1, e_2, e_3, e_4\}$ such that 
$e_0=v_0v_1, \; e_1=v_1v_2, \; e_2=v_0v_2, \; e_3=v_1v_3, \; e_4=v_2v_3$. Line graph of $G_1$ is $L(G_1)=(E_1, F_1)$ where\\ $F_1=\{e_0e_1, e_0e_2, e_0e_3, e_1e_2, e_1e_3, e_1e_4, e_2e_4, e_3e_4\}$. Graphs $G_1$ and $L(G_1)$ are presented in Figure 3. 

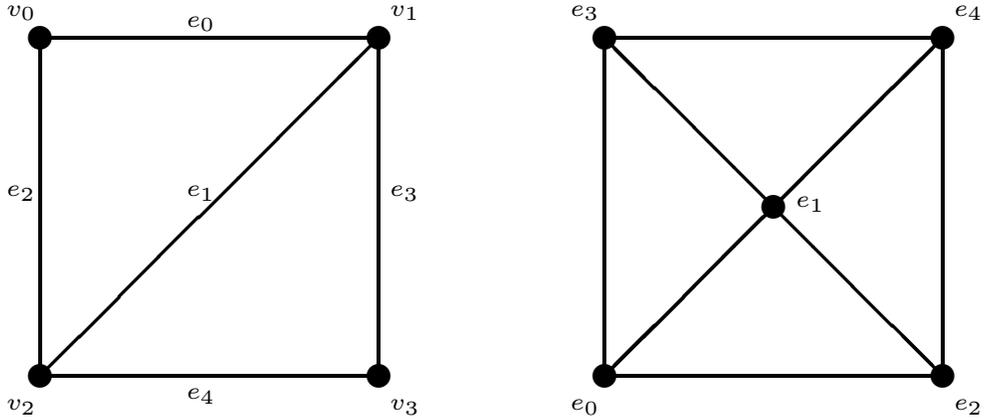
\begin{figure}[htbp]
\centering\setlength\unitlength{1mm}
\scalebox{1.5}{
\begin{picture}(90,40)
\thicklines
\tiny


\put(5,35){\circle*{2}} 
\put(35,35){\circle*{2}}
\put(5,5){\circle*{2}}
\put(35,5){\circle*{2}}

\put(5,5){\line(1,0){30}}
\put(5,5){\line(0,1){30}}
\put(5,35){\line(1,0){30}}
\put(5,5){\line(1,1){30}} 
\put(35,5){\line(0,1){30}}
\put(2,37){$v_0$} 
\put(36,37){$v_1$} 
\put(2,2){$v_2$} 
\put(36,2){$v_3$} 

\put(18,36){$e_0$} 
\put(18,21){$e_1$} 
\put(2,21){$e_2$} 
\put(36,21){$e_3$} 
\put(18,3){$e_4$} 


\put(55,35){\circle*{2}} 
\put(85,35){\circle*{2}}
\put(55,5){\circle*{2}}
\put(85,5){\circle*{2}}
\put(70,20){\circle*{2}}

\put(55,5){\line(1,0){30}}
\put(55,5){\line(0,1){30}}
\put(55,35){\line(1,0){30}}
\put(55,5){\line(1,1){30}} 
\put(85,5){\line(0,1){30}}

\put(70,20){\line(-1,-1){15}}
\put(70,20){\line(-1,1){15}}
\put(70,20){\line(1,-1){15}}
\put(70,20){\line(1,1){15}}

\put(52,37){$e_3$} 
\put(86,37){$e_4$} 
\put(52,2){$e_0$} 
\put(86,2){$e_2$} 
\put(72,20){$e_1$} 

\end{picture}
}
\caption{Graph from Example 1 and its corresponding line graph}
\end{figure}

By using total enumeration technique, it can be shown that $\beta_E(G_1)=3$ with edge metric  base $\{v_0, v_1, v_2\}$. 
On the other hand, $\beta_E'(G_1)=\beta(L(G_1))=2$ with metric base $\{e_0, e_2\}=\{v_0v_1, v_0v_2\}$.

\section{Main results}

\subsection{Lower bound}

Having in mind the fact that vertices from an edge metric basis are also endpoints for some (incident) edges,
the bound presented in Proposition \ref{edim2}, could be improved in some cases.

\begin{thm} Let $G$ be a connected graph and let $\delta(G)$ be the minimum degree of $G$. Then,
$\beta_E(G) \geq 1 + \lceil log_2 \delta(G) \rceil$.\end{thm}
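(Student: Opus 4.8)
The plan is to fix an edge metric basis $S$ with $|S| = k = \beta_E(G)$ and a vertex $v$ of minimum degree $\delta = \delta(G)$, then examine how the $\delta$ edges incident to $v$ are resolved by $S$. Label these edges $e_1 = vw_1, \dots, e_\delta = vw_\delta$. For each $x \in S$ we have $d(x, e_j) = \min\{d(x,v), d(x,w_j)\}$, and since $d(x,w_j)$ differs from $d(x,v)$ by at most $1$, the value $d(x,e_j)$ is always either $d(x,v)$ or $d(x,v)-1$. Hence, relative to the fixed ``reference vector'' $(d(x,v))_{x\in S}$, each edge $e_j$ is encoded by a binary string of length $k$ (entry $1$ at position $x$ meaning $d(x,e_j) = d(x,v)-1$, i.e. $w_j$ is strictly closer to $x$ than $v$ is). Since $S$ resolves all pairs of edges, these $\delta$ binary strings must be pairwise distinct, which already gives $2^k \ge \delta$, i.e. the bound of Proposition~\ref{edim2}. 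The improvement comes from ruling out one particular string.

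The key additional observation is that the all-zeros string cannot occur among the $e_j$. Indeed, suppose $d(x, e_j) = d(x,v)$ for every $x \in S$; I claim this forces $v \in S$ to be impossible to exploit directly, so instead I argue as follows: if some $x \in S$ actually equals the endpoint $w_j$, then $d(x,e_j) = 0 < d(x,v)$, contributing a $1$; and more to the point, I will show the all-zeros pattern would make $e_j$ indistinguishable from a ``phantom'' situation. The cleaner route: observe that $r(e_j, S)$ having $d(x,e_j) = d(x,v)$ for all $x$ means $e_j$ and any edge $e_{j'}$ with the same property are unresolved unless there is only one such edge. So at most one incident edge can realize the all-zeros string. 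Combined with the fact that the remaining $\delta - 1$ (or $\delta$) edges occupy distinct \emph{nonzero} strings among the $2^k - 1$ available, we get $\delta \le 2^k$, but to reach $\delta \le 2^{k-1}$ we must instead invoke Proposition~\ref{edim3}: a vertex in $S$ has degree at most $2^{k-1}$. So I would split into cases according to whether $v \in S$.

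If $v \in S$, then by Proposition~\ref{edim3} directly $\delta = \deg_v \le 2^{k-1}$, hence $k \ge 1 + \log_2 \delta$, and taking ceilings (since $k$ is an integer) gives $k \ge 1 + \lceil \log_2 \delta \rceil$. If $v \notin S$, I return to the binary-string argument: I must show the $\delta$ strings attached to $e_1, \dots, e_\delta$ avoid \emph{two} of the $2^k$ possible strings, or equivalently that they lie in a set of size $2^{k-1}$. The natural candidate is that no string can have a $1$ in a coordinate $x$ while... — here the honest statement is that for a vertex $x \in S$, among all edges incident to a \emph{fixed} vertex $v$, at most one can be strictly closer to $x$, because if $w_j$ and $w_{j'}$ both satisfy $d(x,w_j) = d(x,w_{j'}) = d(x,v)-1$ then $e_j$ and $e_{j'}$ have equal $x$-coordinate, which is fine — so this does not immediately halve the count. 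The main obstacle is precisely this: getting from $2^k$ down to $2^{k-1}$ in the case $v \notin S$ is not purely combinatorial on the strings and seems to need a geometric input. I expect the intended argument picks the vertex $x \in S$ closest to $v$ and shows that the ``$1$'' pattern it induces, together with triangle-inequality constraints along a shortest $x$–$v$ path, eliminates enough strings; alternatively, one reduces to Proposition~\ref{edim3} by arguing that some neighbor or near-vertex of $v$ behaves like a basis element of degree $\ge \delta$. Pinning down that reduction cleanly is the crux, and I would spend the bulk of the write-up there.
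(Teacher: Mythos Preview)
Your proposal has a genuine gap: the case $v \notin S$ is left unresolved, and you acknowledge this yourself. The speculation about shortest paths or ``near-basis'' vertices is not needed, because the whole difficulty is self-inflicted by your choice of $v$.

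The paper's proof avoids the case split entirely by choosing, from the outset, a vertex \emph{inside} $S$ rather than a vertex of minimum degree. Take $w_1 \in S$. Since $\delta(G)$ is the \emph{minimum} degree, $w_1$ is incident to at least $\delta(G)$ edges $e_1,\dots,e_{\delta(G)}$. For each of these, $d(e_j,w_1)=0$, so the first coordinate of every $r(e_j,S)$ is fixed. By exactly your binary-string observation, each of the remaining $k-1$ coordinates takes at most two values, so there are at most $2^{k-1}$ distinct representations available for these $\delta(G)$ edges. Hence $\delta(G)\le 2^{k-1}$, and since $k$ is an integer, $k\ge 1+\lceil\log_2\delta(G)\rceil$.

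In short, your ``$v\in S$'' case \emph{is} the whole proof: nothing requires $v$ to have degree exactly $\delta$, only degree at least $\delta$, and every vertex of $S$ satisfies that. Equivalently, apply Proposition~\ref{edim3} to any $w\in S$ to get $\delta(G)\le\deg(w)\le 2^{k-1}$ in one line. The ``$v\notin S$'' branch is a red herring and should be dropped.
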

\begin{proof} Suppose the contrary, that there exists edge metric generator $S=\{w_1,w_2,...,w_p\}$ with cardinality 
$p < 1 + \lceil log_2 \delta(G) \rceil$.  Vertex $w_1$ is incident to at least $\delta(G)$ edges.
Name them as $e_1$, ..., $e_{\delta(G)}$. Since $w_1$ is incident with $e_1$, ..., $e_{\delta(G)}$,
it is obvious that $d(e_1,w_1)=...=d(e_{\delta(G)},w_1)=0$.
By the definition of distances between vertices and edges, it is clear that for an arbitrary vertex $v \in V (G)$ there 
can be only two different distances to some set of incident edges. 
Then, for each $i$, $i=2,...,p$, distances $d(e_1,w_i)$, ..., $d(e_{\delta(G)},w_i)$ have only two different values,
so since $d(e_1,w_1)=...=d(e_{\delta(G)},w_1)=0$ there exists at most $2^{p-1}$ different edge metric representations
of edges $e_1$, ..., $e_{\delta(G)}$ with respect to $S$, so $\delta(G) \leq 2^{p-1}$. Next, because $p$ has integer value,  
it follows that $\lceil log_2 \delta(G) \rceil \leq p-1$ $\Rightarrow$  $p \geq 1+\lceil log_2 \delta(G) \rceil$,
which is in contradiction to starting assumption! Therefore, $\beta_E(G) \geq 1 + \lceil log_2 \delta(G) \rceil$.
 \end{proof} 

In case of regular graphs, the bound presented in Proposition \ref{edim2} is improved by one.

\begin{cor} Let $G$ be an $r$-regular graph. Then, $\beta_E(G) \geq 1 + \lceil log_2 r \rceil$.\end{cor}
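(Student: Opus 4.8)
The plan is to derive this directly from the preceding Theorem, since an $r$-regular graph is precisely a graph in which every vertex has degree $r$, so its minimum degree satisfies $\delta(G) = r$. First I would note that the edge metric dimension is only defined for connected graphs, so $G$ may be assumed connected and the hypothesis of the Theorem is met. Then applying the Theorem verbatim with $\delta(G)$ replaced by $r$ yields $\beta_E(G) \geq 1 + \lceil \log_2 r \rceil$.

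If one wanted a self-contained argument rather than a one-line appeal, I would simply replay the Theorem's proof in this special case: pick any edge metric generator $S = \{w_1, \dots, w_p\}$, look at the $r$ edges incident to $w_1$ (there are exactly $r$ of them by regularity), observe that each of the remaining $p-1$ vertices $w_i$ assigns at most two distinct distance values to this family of incident edges, so these $r$ edges receive at most $2^{p-1}$ distinct edge metric representations; forcing all of them distinct gives $r \leq 2^{p-1}$, hence $p \geq 1 + \lceil \log_2 r \rceil$.

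I do not anticipate any real obstacle here: the statement is a specialization of the Theorem, and the only thing to be careful about is the connectivity assumption, which is part of the standing setup for $\beta_E$. The remark preceding the corollary already frames this as an improvement by one over Proposition \ref{edim2} in the regular case, so the write-up can be a single sentence invoking the Theorem.

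\begin{proof} Since $G$ is $r$-regular, $\delta(G) = r$, and $G$ is connected (as $\beta_E$ is defined for connected graphs), so the previous Theorem applies and gives $\beta_E(G) \geq 1 + \lceil \log_2 r \rceil$. \end{proof}
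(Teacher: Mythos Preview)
Your proposal is correct and matches the paper's approach: the corollary is stated without proof as an immediate consequence of the preceding Theorem, obtained by setting $\delta(G)=r$ for an $r$-regular graph. Your one-line proof (and the optional unpacked version) is exactly what is intended.
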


Since $GP(n,k)$ is 3-regular graphs, and $\lceil log_2 3 \rceil =2$ then it holds next corollary.

\begin{cor} \label{deg} $\beta_E(GP(n,k)) \geq 3$.\end{cor}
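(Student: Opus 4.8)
The plan is to derive this immediately from the preceding corollary on $r$-regular graphs. First I would observe that every generalized Petersen graph $GP(n,k)$ is $3$-regular: directly from the edge set $E(GP(n,k)) = \{u_iu_{i+1},\, u_iv_i,\, v_iv_{i+k}\}$, each outer vertex $u_i$ is incident exactly to $u_{i-1}$, $u_{i+1}$ and $v_i$, and each inner vertex $v_i$ is incident exactly to $v_{i-k}$, $v_{i+k}$ and $u_i$. (A small remark is in order that $1 \le k < n/2$ ensures $i-k$, $i$, $i+k$ are three distinct indices modulo $n$, so no multi-edges or loops arise and the degree is genuinely $3$.)

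Next I would invoke the corollary for $r$-regular graphs with $r = 3$, obtaining $\beta_E(GP(n,k)) \ge 1 + \lceil \log_2 3 \rceil$. Then I would simply compute $\lceil \log_2 3 \rceil = 2$, since $2^1 < 3 \le 2^2$, and conclude $\beta_E(GP(n,k)) \ge 1 + 2 = 3$.

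There is essentially no obstacle here: the only non-bookkeeping point is confirming $3$-regularity, and even that is transparent from the definition. The substantive work has already been done in the theorem and its corollary on regular graphs; this statement is a one-line specialization. Accordingly, I would keep the proof to two or three sentences rather than belaboring it.
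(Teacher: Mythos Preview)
Your proposal is correct and matches the paper's own argument essentially verbatim: the paper also notes that $GP(n,k)$ is $3$-regular, applies the preceding corollary for $r$-regular graphs with $r=3$, and computes $1+\lceil\log_2 3\rceil=3$. There is nothing to add or change.
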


\subsection{Exact value for $GP(n,1)$}

In this section, we are studding generalized Petersen graphs
$GP(n,1)$. The following theorem gives the exact value of edge metric dimension of generalized Petersen graphs
$GP(n,1)$.

\begin{thm}\label{gpn1}$\beta_E(GP(n,1))=3$\end{thm}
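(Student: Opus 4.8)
The inequality $\beta_E(GP(n,1))\ge 3$ is already available: $GP(n,1)$ is $3$-regular, so Corollary~\ref{deg} applies. Hence the whole task is to exhibit, for every $n\ge 3$, an edge metric generator of cardinality $3$. The plan is to guess an explicit set built from the outer cycle, namely $S=\{u_0,u_1,u_m\}$ with $m=\lceil n/2\rceil$ (the example $GP(6,1)$ uses $m=3$), and to check that the $3n$ edge metric representations with respect to $S$ are pairwise distinct. Some care is needed here: the ``obvious'' choice $m=\lfloor n/2\rfloor$ already fails for odd $n$, because then the spoke $u_0v_0$ and the outer edge $u_{n-1}u_0$ receive the same representation, so $m$ must be the larger half.

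The next step is to record closed-form distances. Writing $c(a,b)=\min(|a-b|,\,n-|a-b|)$ for the distance in the outer cycle, one has $d(u_a,u_b)=c(a,b)$ and $d(u_a,v_b)=c(a,b)+1$ in $GP(n,1)$; consequently, for any outer vertex $u_t$, the spoke $u_iv_i$ satisfies $d(u_t,u_iv_i)=c(t,i)$, the outer edge $u_iu_{i+1}$ satisfies $d(u_t,u_iu_{i+1})=\min\{c(t,i),c(t,i+1)\}$, and the inner edge $v_iv_{i+1}$ satisfies $d(u_t,v_iv_{i+1})=\min\{c(t,i),c(t,i+1)\}+1$. In particular the representation of $v_iv_{i+1}$ is obtained from that of $u_iu_{i+1}$ by adding $(1,1,1)$, and the restriction of $S$ to the outer cycle sees spokes and outer edges exactly as $C_n$ does. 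I would then dispatch the within-type comparisons cheaply: the spokes are separated already by the first two coordinates because two adjacent vertices form a metric basis of $C_n$ (cf.\ Proposition~\ref{edim1}, $\beta(C_n)=2$); the outer edges are separated already by the first two coordinates because two adjacent vertices form an edge metric basis of $C_n$ ($\beta_E(C_n)=2$); and the inner edges inherit distinctness from the outer edges via the $(1,1,1)$-shift. A short split on the parity of $n$ lets one make all these formulas fully explicit as functions of $i$ if a completely computational verification is preferred.

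What remains, and what I expect to be the crux, is the cross-type analysis: spoke versus outer edge, spoke versus inner edge, and outer versus inner edge. For spoke versus outer edge one shows a tie would force the spoke index to be ``antipodal'' (in the relevant one-sided sense on the cycle) to all of $u_0,u_1,u_m$ simultaneously, and the choice $m=\lceil n/2\rceil$ is precisely what rules this out — this is the step where the third vertex earns its place and where the naive $m=\lfloor n/2\rfloor$ breaks. For comparisons against inner edges one uses that every coordinate of an inner-edge representation is $\ge 1$, and equals $1$ only for the (at most six) inner edges incident to $v_0$, to $v_1$, or to $v_m$; hence an outer edge with a zero coordinate (one incident to $u_0$, $u_1$ or $u_m$) cannot collide with an inner edge at all, and the remaining ``interior'' outer edges, together with the spokes, are checked against the short explicit list of inner edges using the distance formulas above. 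The main obstacle is therefore not any single deep idea but the bookkeeping: the elementary identities bifurcate according to the parity of $n$ and according to whether an index lies near $0$, near $1$, or near $m$ modulo $n$, and one must carry all residue ranges through without error; the distance formulas and the within-type arguments are routine by comparison.
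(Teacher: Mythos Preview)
Your plan is sound and will go through, but it is \emph{not} the route the paper takes. The paper chooses the generator $S=\{u_0,u_1,v_0\}$ (two outer vertices and one inner vertex), splits into $n=2t$ and $n=2t+1$, and simply tabulates closed forms for $r(u_iu_{i+1},S)$, $r(u_iv_i,S)$, $r(v_iv_{i+1},S)$, observing that the resulting $3n$ triples are pairwise distinct; there is no structural reduction, just a direct verification.

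Your choice $S=\{u_0,u_1,u_{\lceil n/2\rceil}\}$ keeps all three landmarks on the outer cycle, which buys you two genuine simplifications the paper does not have: the within-type checks for spokes and for outer edges become the known statements $\beta(C_n)=2$ and $\beta_E(C_n)=2$ with respect to the adjacent pair $\{u_0,u_1\}$, and the inner edges inherit distinctness for free via the exact $(1,1,1)$-shift. The price is that the cross-type analysis carries all the weight, and this is precisely the part of your sketch that is least precise: the sentence about checking the ``remaining interior outer edges, together with the spokes, \dots against the short explicit list of inner edges'' is not correct as stated, because an interior outer edge or spoke can have all three coordinates $\ge 2$ and then there is no a priori short list of candidate inner edges. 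What actually resolves spoke--vs--outer and spoke--vs--inner is the placement of $u_m$ roughly antipodal to $\{u_0,u_1\}$: whenever $u_j$ is the nearer endpoint of $u_ju_{j+1}$ to both $u_0$ and $u_1$, it is the \emph{farther} endpoint from $u_m$, and this asymmetry is what separates the representations. So your approach is more conceptual where it works cleanly, but you should expect to finish the cross-type cases by the same kind of explicit parity-split computation the paper uses throughout; the bookkeeping does not disappear, it just concentrates in one place.
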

\begin{proof} Let $S=\{u_0, u_1, v_0\}$. \\
\underline{{\it Case 1.}}  $n = 2t$\\
Edge metric representations with respect to $S$ are:
\begin{equation*}r(u_i u_{i+1},S) = \begin{cases}
(0,0,1), \; i=0 \\
(i,i-1,i+1),\; 1 \le i \le t-1 \\
(t-1,t-1,t), \; i=t \\
(2t-1-i,2t-i,2t-i),\; t+1 \le i \le 2t-1 \\
\end{cases}.
\end{equation*}

\begin{equation*}r(u_i v_i,S) = \begin{cases}
(0,1,0), \; i=0 \\
(i,i-1,i),\; 1 \le i \le t \\
(2t-i,2t+1-i,2t-i),\; t+1 \le i \le 2t-1 \\
\end{cases}.
\end{equation*}

\begin{equation*}r(v_i v_{i+1},S) = \begin{cases}
(1,1,0), \; i=0 \\
(i+1,i,i),\; 1 \le i \le t-1 \\
(t,t,t-1), \; i=t \\
(2t-i,2t+1-i,2t-1-i),\; t+1 \le i \le 2t-2 \\
(1,2,0), \; i=2t-1 \\
\end{cases}.
\end{equation*}

Since all edge metric representations with respect to $S$ are
mutually different, then $S$ is an edge metric generator.
Since $|S|=3$ and from Corollary \ref{deg} it follows $\beta_E(GP(2t,1)) = 3$.

\underline{{\it Case 2.}}  $n = 2t+1$\\
Edge metric representations with respect to $S$ are:
\begin{equation*}r(u_i u_{i+1},S) = \begin{cases}
(0,0,1), \; i=0 \\
(i,i-1,i+1),\; 1 \le i \le t \\
(2t-i,2t+1-i,2t+1-i),\; t+1 \le i \le 2t \\
\end{cases}.
\end{equation*}

\begin{equation*}r(u_i v_i,S) = \begin{cases}
(0,1,0), \; i=0 \\
(i,i-1,i),\; 1 \le i \le t \\
(t,t,t), \; i=t+1 \\
(2t+1-i,2t+2-i,2t+1-i),\; t+2 \le i \le 2t \\
\end{cases}.
\end{equation*}

\begin{equation*}r(v_i v_{i+1},S) = \begin{cases}
(1,1,0), \; i=0 \\
(i+1,i,i),\; 1 \le i \le t \\
(2t+1-i,2t+2-i,2t-i),\; t+1 \le i \le 2t \\
\end{cases}.
\end{equation*}

Similarly as in Case 1, all edge metric representations with respect to $S$ are
mutually different, so $S$ is an edge metric generator.
Having in mind that $|S|=3$ and from Corollary \ref{deg} it follows $\beta_E(GP(2t+1,1)) = 3$.
\end{proof}

In \cite{kel16}, the authors came to an interesting question of relation between metric dimension and edge metric dimension
of some graphs (called {\em realization question}), with the conclusion that it is possible to find all three cases, i.e. graphs $G$ such that $\beta_E(G) = \beta(G)$, 
$\beta_E(G) > \beta(G)$ or $\beta_E(G) < \beta(G)$.
For $GP(n,1)$ there are only two cases, since, from \cite{cac07}, it follows $\beta(GP(n,1))= \begin{cases} 
2, n \;is \; odd \\
3, n \;is \; even \\
\end{cases}$, so for $n=2t$ it holds $\beta_E(GP(n,1)) = \beta(GP(n,1))=3$,
while for $n=2t+1$ it holds $3=\beta_E(GP(n,1)) > \beta(GP(n,1))=2$.

Another interesting discussion is comparison between $\beta_E(GP(n,1))$ and $\beta_E'(GP(n,1))$. From \cite{nas18}, it follows that $\beta_E'(GP(n,1))=3$, which matches $\beta_E(GP(n,1))=3$ from Theorem~\ref{gpn1}.  

\subsection{Exact value for $GP(n,2)$}

Analyze in this section is focused on edge metric dimension of generalized Petersen graphs
$GP(n,2)$.   
Next theorem determines the exact value of the edge metric dimension for such graphs.

\begin{thm}\label{gpn2}$\beta_E(GP(n,2))=\begin{cases}
3, \;\; n=8 \vee n \ge 10 \\
4, \;\; n \in \{5,6,7,9\}
\end{cases}$\end{thm}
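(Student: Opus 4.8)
The plan is to mirror the structure of the proof of Theorem~\ref{gpn1}: exhibit an explicit edge metric generator of size~$3$ for $n=8$ and for all $n\ge 10$, and separately show that no set of size~$3$ works for $n\in\{5,6,7,9\}$, which combined with Corollary~\ref{deg} forces $\beta_E=4$ in those four cases (after also producing an explicit generator of size~$4$ for each of them, e.g.\ by the total enumeration already announced for $GP(5,2)$ in Example~1).

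\textbf{Upper bound for $n=8$ and $n\ge 10$.} First I would fix a candidate set, most naturally of the form $S=\{u_0,u_a,v_b\}$ for small constants $a,b$ (the analogue of $\{u_0,u_1,v_0\}$ in Theorem~\ref{gpn1}); some experimentation with small cases suggests a choice like $S=\{u_0,u_1,v_0\}$ or a nearby variant will work once $n$ is large enough. The key step is to compute, as a piecewise-linear function of the index $i$, the three coordinates $d(u_iu_{i+1},S)$, $d(u_iv_i,S)$, $d(v_iv_{i+2},S)$, splitting the cycle into $O(1)$ arcs according to which way the shortest path runs. Here one must use the distance structure of $GP(n,2)$: distances along the inner ``pentagram'' cycle move in steps of $2$, so $d(v_i,v_j)$ depends on the parity of $i-j$ and whether it is cheaper to travel on the inner rim or to step out to the $u$-cycle, traverse it, and step back in; for $k=2$ these shortest paths are well understood and give closed forms linear in $i$ on each arc. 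Once the three coordinate functions are written down, verifying that the $3n$ representation vectors are pairwise distinct reduces to finitely many inequalities between linear expressions in $i$ on each pair of arcs, exactly as in Theorem~\ref{gpn1}; this is routine but lengthy, and the threshold ``$n=8$ or $n\ge10$'' will emerge as the set of $n$ for which all these inequalities hold (a few short collisions appear precisely at $n=9$ and $n\le7$).

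\textbf{Lower bound for $n\in\{5,6,7,9\}$.} For these finitely many graphs the cleanest route is direct: since $GP(n,2)$ is $3$-regular with $2n$ vertices, check by exhaustive search over all $\binom{2n}{3}$ vertex triples that none is an edge metric generator — this is the ``total enumeration technique'' already invoked in Example~1 for $GP(5,2)$ — and simultaneously record a size-$4$ generator for each, establishing $\beta_E=4$. If a computation-free argument is wanted, one can instead argue structurally: in $GP(n,2)$ with $n$ small, the diameter is tiny (e.g.\ $2$ or $3$), so each vertex of a size-$3$ set $S$ distinguishes edges only by a coordinate in $\{0,1,2,(3)\}$, bounding the number of realizable representation vectors below $3n$ and forcing a collision; making this counting tight enough to exclude exactly $\{5,6,7,9\}$ but not $8$ is delicate, so the enumeration route is preferable.

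\textbf{Main obstacle.} The hard part is the upper-bound direction for general $n$: getting the shortest-path distances in $GP(n,2)$ right on every arc — particularly the places where a shortest $v_i$-to-$v_j$ path leaves and re-enters the inner cycle — and then organizing the pairwise-distinctness check so that the exceptional set comes out to be exactly $\{5,6,7,9\}$. I would expect to need two top-level cases by the parity of $n$ (as in Theorem~\ref{gpn1}), and within each, a handful of sub-cases for the relative position of indices; the bookkeeping, not any single clever idea, is the real work, and the small exceptional values must be treated by hand to confirm the boundary of the formula.
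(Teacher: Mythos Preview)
Your high-level strategy matches the paper's: exhibit an explicit size-$3$ edge metric generator for the generic case and handle the sporadic values $n\le 15$ (in particular $n\in\{5,6,7,9\}$) by total enumeration, invoking Corollary~\ref{deg} for the lower bound. On the enumeration side you are in agreement with the paper.

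Where your plan diverges from the paper, and where it would likely break down, is in the two concrete choices you make for the generic upper bound.

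\emph{Shape of the generator.} You propose $S=\{u_0,u_a,v_b\}$ with $a,b$ fixed small constants, by analogy with $\{u_0,u_1,v_0\}$ in Theorem~\ref{gpn1}. The paper's generators for $GP(n,2)$ are not of this kind: for $n=4t$ it uses $\{u_0,v_3,v_{2t+3}\}$, for $n=4t+1$ it uses $\{u_0,v_{2t-5},v_{2t-4}\}$, and for $n=4t+2,4t+3$ it uses $\{u_0,v_{2t-2},v_{2t-1}\}$. In every case at least one (usually two) of the basis vertices sit near index $n/2$, not near $0$. Your ``small constants'' ansatz is therefore not the one that is known to work, and there is no evidence that a fixed $\{u_0,u_1,v_0\}$ resolves all edges for large $n$; you should expect to need an $n$-dependent placement.

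\emph{Case structure.} You anticipate two top-level cases by parity of $n$. The paper needs four, according to $n\bmod 4$, with a different generator in each residue class. Part of the reason is the point you touch on but do not fully exploit: when $n$ is even the inner edges $v_iv_{i+2}$ split the $v$-vertices into two disjoint cycles of length $n/2$, while for $n$ odd they form a single $n$-cycle, so the distance formula $d(v_i,v_j)$ is qualitatively different in the two parities; within each parity a further split is required to handle the ``midpoint'' behaviour of the outer cycle. Planning for only two cases underestimates the bookkeeping and will force you to redo the analysis once the piecewise formulas fail to line up.

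In short, the skeleton is right, but the two load-bearing guesses --- a constant-index generator and a mod-$2$ split --- do not match what actually makes the proof go through; revise both before starting the distance computations.
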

\begin{proof} In the case of $n=4t$, $t \geq 4$, let $S=\{u_0, v_3, v_{2t+3}\}$.
All edge metric representations with respect to $S$ are given in Table \ref{tabn2t0}.
The first column is related to edge $e \in E(GP(4t,2))$, the second column presents its
edge metric representation $r(e)$, while the last column gives the condition in which statement in the second column is true. 
As it can be observed from Table \ref{tabn2t0}, all edge metric representations with respect to $S$ are
mutually different, so $S$ is an edge metric generator for $GP(4t,2)$. Having in mind that $|S|=3$ 
and from Corollary \ref{deg} it follows that for $t \geq 4$, $\beta_E(GP(4t,2)) = 3$ holds.

If $n=4t+1$, $t \geq 4$, then let $S=\{u_0,  v_{2t-5}, v_{2t-4}\}$.
All edge metric representations with respect to $S$ are given in Table \ref{tabn2t1}.
As it can be seen in Table \ref{tabn2t1} all edge metric representations with respect to $S$ are
mutually different, so $S$ is a edge metric generator for $GP(4t+1,2)$. Again, having in mind that $|S|=3$ 
and from Corollary \ref{deg} it follows that for $t \geq 4$, $\beta_E(GP(4t+1,2)) = 3$ holds.

For $t \geq 4$ in cases when $n=4t+2$ or $n=4t+3$ let us define $S=\{u_0,  v_{2t-2}, v_{2t-1}\}$.
All edge metric representations of $GP(4t+2,2)$ and $GP(4t+3,2)$, with respect to $S$, 
are given in Table \ref{tabn2t2} and Table \ref{tabn2t3}, respectively.
It can be seen in Table \ref{tabn2t2} that all edge metric representations of $GP(4t+2,2)$, with respect to $S$, are
mutually different, so $S$ is a edge metric generator for $GP(4t+2,2)$. Again, having in mind that $|S|=3$ 
and from Corollary \ref{deg} it follows that for $t \geq 4$, $\beta_E(GP(4t+2,2)) = 3$ holds.
The same conclusion can be drawn for $GP(4t+3,2)$, since all its edge metric representations
presented in Table \ref{tabn2t3} are also mutually different, so for $t \geq 4$, $\beta_E(GP(4t+3,2)) = 3$ holds.

For the remaining cases when $n \leq 15$, edge metric dimension of $GP(n,2)$ 
is found by the total enumeration technique, and it is presented in Table \ref{tabgpn2}, together with the corresponding 
edge metric bases. It should be stated that edge metric dimension is equal to 3, except in cases for $n \in \{5,6,7,9\}$,
when it is equal to 4.
\end{proof}

For $GP(n,2)$ there are only two cases for realization question. From \cite{jav08} it follows that $\beta(GP(n,2))=3$, so for $n \notin \{5,6,7,9\}$ edge metric dimension of $GP(n,2)$
is equal to its metric dimension. Only in cases when $n \in \{5,6,7,9\}$, it holds $4=\beta_E(GP(n,2)) > \beta(GP(n,2))=3$.

Another interesting discussion is comparison between $\beta_E(GP(n,2))$ and $\beta_E'(GP(n,2))$. 
In contrast to $GP(n,1)$, values of $\beta_E(GP(n,2))$ and $\beta_E'(GP(n,2))$ sometimes differ. 
For example, $\beta_E(GP(9,2))=4$ while $\beta_E'(GP(n,2))=3$. 

\begin{table}
\caption{Edge metric representations for $GP(4t,2)$} \label{tabn2t0}
 \tiny
\begin{center} 
\begin{tabular}{|c|c|c|}
\hline
 $e$ & $r(e)$ & {\it Condition} \\
 \hline
$u_{2i} u_{2i+1}$ & $(0,2,t)$ & $i=0$ \\
& $(2,1,t+1)$ & $i=1$ \\
& $(i+2,i,t+2-i)$ & $2 \le i \le t$ \\
& $(2t+2-i,2t+2-i,i-t)$ & $t+1 \le i \le 2t-3$ \\
& $(3,4,t-2)$ & $i=2t-2$ \\
& $(1,3,t-1)$ & $i=2t-1$ \\
 \hline
$u_{2i+1} u_{2i+2}$ & $(1,2,t)$ & $i=0$ \\
& $(3,1,t+1)$ & $i=1$ \\
& $(i+3,i,t+2-i)$ & $2 \le i \le t-1$ \\
& $(t+1,t,2)$ & $i=t$ \\
& $(2t+1-i,2t+2-i,i-t)$ & $t+1 \le i \le 2t-3$ \\
& $(2,4,t-2)$ & $i=2t-2$ \\
& $(0,3,t-1)$ & $i=2t-1$ \\
 \hline
$u_{2i} v_{2i}$ & $(0,3,t)$ & $i=0$ \\
& $(2,2,t+1)$ & $i=1$ \\
& $(i+1,i,t+3-i)$ & $2 \le i \le t$ \\
& $(t,t+1,2)$ & $i=t+1$ \\
& $(2t+1-i,2t+3-i,i-t)$ & $t+2 \le i \le 2t-1$ \\
 \hline
$u_{2i+1} v_{2i+1}$ & $(1,1,t-1)$ & $i=0$ \\
& $(i+2,i-1,t+1-i)$ & $1 \le i \le t-1$ \\
& $(t+1,t-1,1)$ & $i=t$ \\
& $(2t+1-i,2t+1-i,i-t-1)$ & $t+1 \le i \le 2t-2$ \\
& $(1,2,t-2)$ & $i=2t-1$ \\
 \hline
$v_{2i} v_{2i+2}$ & $(1,3,t+1)$ & $i=0$ \\
& $(2,3,t+2)$ & $i=1$ \\
& $(i+1,i+1,t+3-i)$ & $2 \le i \le t-1$ \\
& $(t,t+1,3)$ & $i=t$ \\
& $(t-1,t+2,3)$ & $i=t+1$ \\
& $(2t-i,2t+3-i,i+1-t)$ & $t+2 \le i \le 2t-2$ \\
& $(1,4,t)$ & $i=2t-1$ \\
 \hline
$v_{2i+1} v_{2i+3}$ & $(2,0,t-1)$ & $i=0$ \\
& $(i+2,i-1,t-i)$ & $1 \le i \le t-1$ \\
& $(t,t-1,0)$ & $i=t$ \\
& $(2t-i,2t-i,i-t-1)$ & $t+1 \le i \le 2t-2$ \\
& $(2,1,t-2)$ & $i=2t-1$ \\
 \hline
\end{tabular}
\end{center}
\end{table}

\begin{table}
\caption{Edge metric representations for $GP(4t+1,2)$} \label{tabn2t1}
 \tiny
\begin{center}
\begin{tabular}{|c|c|c|}
\hline
 $e$ & $r(e)$ & {\it Condition} \\
 \hline $u_{2i} u_{2i+1}$ & $(0,t-2,t-1)$ & $i=0$ \\
& $(2,t-3,t-2)$ & $i=1$ \\
& $(i+2,t-2-i,t-1-i)$ & $2 \le i \le t-3$ \\
& $(i+2,i+4-t,i+3-t)$ & $t-2 \le i \le t$ \\
& $(2t+2-i,i+4-t,i+3-t)$ & $t+1 \le i \le 2t-3$ \\
& $(4t-2i, 3t-1-i, 3t-1-i)$ & $2t-2 \le i \le 2t$ \\  
 \hline
$u_{2i+1} u_{2i+2}$ & $(1,t-2,t-2)$ & $i=0$ \\
& $(3,t-3,t-3)$ & $i=1$ \\
& $(i+3,t-2-i,t-2-i)$ & $2 \le i \le t-3$ \\
& $(t+1,2,2)$ & $i=t-2$ \\
& $(t+2,3,3)$ & $i=t-1$ \\
& $(2t+2-i,i+4-t,i+4-t)$ & $t \le i \le 2t-3$ \\
& $(3,t,t+1)$ & $i=2t-2$ \\
& $(1,t-1,t)$ & $i=2t-1$ \\ 
 \hline
$u_{2i} v_{2i}$ & $(0,t-1,t-2)$ & $i=0$ \\
& $(2,t-2,t-3)$ & $i=1$ \\
& $(i+1,t-1-i,t-2-i)$ & $2 \le i \le t-3$ \\
& $(i+1,i+4-t,i+2-t)$ & $t-2 \le i \le t$ \\ 
& $(2t+2-i,i+4-t,i+2-t)$ & $t+1 \le i \le 2t-3$ \\
& $(4,t,t)$ & $i=2t-2$ \\
& $(3,t-1,t+1)$ & $i=2t-1$ \\
& $(1,t-2,t)$ & $i=2t$ \\
 \hline
$u_{2i+1} v_{2i+1}$ & $(1,t-3,t-1)$ & $i=0$ \\
& $(i+2,t-3-i,t-1-i)$ & $1 \le i \le t-3$ \\
& $(t,1,2)$ & $i=t-2$ \\
& $(t+1,2,3)$ & $i=t-1$ \\
& $(2t+1-i,i+3-t,i+4-t)$ & $t \le i \le 2t-3$ \\ 
& $(3,t+1,t)$ & $i=2t-2$ \\
& $(2,t,t-1)$ & $i=2t-1$ \\
 \hline
$v_{2i} v_{2i+2}$ & $(i+1,t-1-i,t-3-i)$ & $0 \le i \le t-4$ \\
& $(t-2,3,0)$ & $i=t-3$ \\
& $(t-1,3,0)$ & $i=t-2$ \\
& $(t,4,1)$ & $i=t-1$ \\
& $(2t+1-i,i+5-t,i+2-t)$ & $t \le i \le 2t-4$ \\
& $(2t+1-i, 3t-3-i, i+2-t)$ & $2t-3 \le i \le 2t-1$\\ 
& $(2,t-3,t)$ & $i=2t$ \\
 \hline
$v_{2i+1} v_{2i+3}$ & $(2,t-4,t-1)$ & $i=0$ \\
& $(i+2,t-4-i,t-1-i)$ & $1 \le i \le t-4$ \\
& $(t-1,0,3)$ & $i=t-3$ \\
& $(t,1,3)$ & $i=t-2$ \\
& $(2t-i,i+3-t,i+5-t)$ & $t-1 \le i \le 2t-4$ \\
& $(3,t,t)$ & $i=2t-3$ \\
& $(2,t+1,t-1)$ & $i=2t-2$ \\
& $(1,t,t-2)$ & $i=2t-1$ \\
 \hline
\end{tabular}
\end{center}
\end{table}

\begin{table}
\caption{Edge metric representations for $GP(4t+2,2)$} \label{tabn2t2}
 \tiny
\begin{center}
\begin{tabular}{|c|c|c|}
\hline
 $e$ & $r(e)$ & {\it Condition} \\
 \hline $u_{2i} u_{2i+1}$ & $(0,t,t+1)$ & $i=0$ \\
& $(2,t-1,t)$ & $i=1$ \\
& $(i+2,t-i,t+1-i)$ & $2 \le i \le t-1$ \\
& $(t+2,2,1)$ & $i=t$ \\
& $(2t+3-i,i+2-t,i+1-t)$ & $t+1 \le i \le 2t-2$ \\
& $(3,t+1,t)$ & $i=2t-1$ \\
& $(1,t+1,t+1)$ & $i=2t$ \\
 \hline
$u_{2i+1} u_{2i+2}$ & $(1,t,t)$ & $i=0$ \\
& $(3,t-1,t-1)$ & $i=1$ \\
& $(i+3,t-i,t-i)$ & $2 \le i \le t-1$ \\
& $(2t+2-i,i+2-t,i+2-t)$ & $t \le i \le 2t-2$ \\
& $(2,t+1,t+1)$ & $i=2t-1$ \\
& $(0,t+1,t+1)$ & $i=2t$ \\
 \hline
$u_{2i} v_{2i}$ & $(0,t+1,t)$ & $i=0$ \\
& $(2,t,t-1)$ & $i=1$ \\
& $(i+1,t+1-i,t-i)$ & $2 \le i \le t-1$ \\
& $(t+1,2,0)$ & $i=t$ \\
& $(2t+2-i,i+2-t,i-t)$ & $t+1 \le i \le 2t$ \\
 \hline
$u_{2i+1} v_{2i+1}$ & $(1,t-1,t+1)$ & $i=0$ \\
& $(i+2,t-1-i,t+1-i)$ & $1 \le i \le t-1$ \\
& $(2t+2-i,i+1-t,i+2-t)$ & $t \le i \le 2t-1$ \\ 
& $(1,t,t+2)$ & $i=2t$ \\
 \hline
$v_{2i} v_{2i+2}$ & $(i+1,t+1-i,t-1-i)$ & $0 \le i \le t-2$ \\
& $(t,3,0)$ & $i=t-1$ \\
& $(2t+1-i,i+3-t,i-t)$ & $t \le i \le 2t-1$ \\
& $(1,t+2,t)$ & $i=2t$ \\
 \hline
$v_{2i+1} v_{2i+3}$ & $(i+2,t-2-i,t+1-i)$ & $0 \le i \le t-2$ \\
& $(t+1,0,3)$ & $i=t-1$ \\
& $(2t+1-i,i+1-t,i+3-t)$ & $t \le i \le 2t-1$ \\
& $(2,t-1,t+2)$ & $i=2t$ \\
 \hline
\end{tabular}
\end{center}
\end{table}

\begin{table}
\caption{Edge metric representations for $GP(4t+3,2)$} \label{tabn2t3}
 \tiny
\begin{center}
\begin{tabular}{|c|c|c|}
\hline
 $e$ & $r(e)$ & {\it Condition} \\
 \hline $u_{2i} u_{2i+1}$ & $(0,t,t+1)$ & $i=0$ \\
& $(2,t-1,t)$ & $i=1$ \\
& $(i+2,t-i,t+1-i)$ & $2 \le i \le t-1$ \\
& $(t+2,2,1)$ & $i=t$ \\
& $(2t+3-i,i+2-t,i+1-t)$ & $t+1 \le i \le 2t-1$ \\
& $(2,t+2,t+1)$ & $i=2t$ \\
& $(0,t+1,t+1)$ & $i=2t+1$ \\
 \hline
$u_{2i+1} u_{2i+2}$ & $(1,t,t)$ & $i=0$ \\
& $(3,t-1,t-1)$ & $i=1$ \\
& $(i+3,t-i,t-i)$ & $2 \le i \le t-1$ \\
& $(2t+3-i,i+2-t,i+2-t)$ & $t \le i \le 2t-2$ \\
& $(3,t+1,t+1)$ & $i=2t-1$ \\
& $(1,t+1,t+2)$ & $i=2t$ \\
 \hline
$u_{2i} v_{2i}$ & $(0,t+1,t)$ & $i=0$ \\
& $(2,t,t-1)$ & $i=1$ \\
& $(i+1,t+1-i,t-i)$ & $2 \le i \le t-1$ \\
& $(t+1,2,0)$ & $i=t$ \\
& $(2t+3-i,i+2-t,i-t)$ & $t+1 \le i \le 2t-1$ \\
& $(3,t+1,t)$ & $i=2t$ \\
& $(1,t,t+1)$ & $i=2t+1$ \\
 \hline
$u_{2i+1} v_{2i+1}$ & $(1,t-1,t+1)$ & $i=0$ \\
& $(i+2,t-1-i,t+1-i)$ & $1 \le i \le t-1$ \\
& $(2t+2-i,i+1-t,i+2-t)$ & $t \le i \le 2t-1$ \\ 
& $(2,t+1,t+1)$ & $i=2t$ \\
 \hline
$v_{2i} v_{2i+2}$ & $(i+1,t+1-i,t-1-i)$ & $0 \le i \le t-2$ \\
& $(t,3,0)$ & $i=t-1$ \\
& $(t+1,3,0)$ & $i=t$ \\
& $(2t+2-i,i+3-t,i-t)$ & $t+1 \le i \le 2t-2$ \\
& $(3,t+1,t-1)$ & $i=2t-1$ \\
& $(2,t,t)$ & $i=2t$ \\
& $(2,t-1,t+1)$ & $i=2t+1$ \\
 \hline
$v_{2i+1} v_{2i+3}$ & $(i+2,t-2-i,t+1-i)$ & $0 \le i \le t-2$ \\
& $(t+1,0,3)$ & $i=t-1$ \\
& $(2t+1-i,i+1-t,i+3-t)$ & $t \le i \le 2t-2$ \\
& $(2,t,t+1)$ & $i=2t-1$ \\
& $(1,t+1,t)$ & $i=2t$ \\
 \hline
\end{tabular}
\end{center}
\end{table}

\begin{table}
\caption{Edge resolving bases of $GP(n,2)$} \label{tabgpn2}
\begin{center}
\begin{tabular}{|c|c|c|}
\hline
 $n$ & {\it basis } & $\beta_E(GP(n,2))$ \\
 \hline
$5$ & $\{u_0,u_1,u_3,v_3\}$ & 4 \\   
 $6$ & $\{u_0,u_1,u_2, u_3\}$ & 4 \\   
 $7$ & $\{u_0,u_1,u_4,v_2\}$ & 4 \\   
 $8$ & $\{u_0,u_2,v_4\}$ & 3 \\   
 $9$ & $\{u_0,u_1,u_2,v_5\}$ & 4 \\   
 $10$ & $\{u_0,u_3,v_6\}$ & 3 \\   
 $11$ & $\{u_0,u_3,v_4\}$ & 3 \\   
 $12$ & $\{u_0,u_3,v_4\}$ & 3 \\   
 $13$ & $\{u_0,v_3,v_4\}$ & 3 \\   
 $14$ & $\{u_0,u_4,v_1\}$ & 3 \\   
 $15$ & $\{u_0,u_5,v_1\}$ & 3 \\   
 \hline
$n=4t \wedge t \ge 4$ & $\{u_0, v_3, v_{2t+3}\}$ & 3 \\
$n=4t+1 \wedge t \ge 4$ & $\{u_0,  v_{2t-5}, v_{2t-4}\}$ & 3 \\
$(n=4t+2 \vee n=4t+3) \wedge t \ge 4$ & $\{u_0,  v_{2t-2}, v_{2t-1}\}$ & 3 \\
 \hline
 \end{tabular}
\end{center}
\end{table}

\section{Conclusions}
In this article, the recently introduced edge metric dimension problem is
considered.
Exact formulae for generalized Petersen graphs $GP(n,1)$ and $GP(n,2)$ are stated and proved. Moreover, the lower bound for 3-regular graphs, which holds for all generalized Petersen graphs, is given.

Possible future research could be finding the edge metric dimension of some other challenging classes of graphs.
Another research direction is construction of some metaheuristic approach for solving an edge metric dimension problem.

\end{document}